\newtheorem{Theorem}{Theorem}
\newtheorem{lemma}{Lemma}
\newtheorem{remark}{Remark}
\begin{document}
\title{\bf{Logistic approximations and their consequences to bifurcations patterns and long-run dynamics}}
\author{Torsten Lindstr\"{o}m \\
Department of Mathematics \\
Linnaeus University \\
SE-35195 V\"{a}xj\"{o}, SWEDEN\\
\\
Yuanji Cheng\\
School of Technology\\
Malm\"{o} University\\
SE-20506 Malm\"{o}, SWEDEN}
\date{}
\maketitle

\begin{abstract}
On infinitesimally short time interval various processes contributing to population change tend to operate independently so that we can simply add their contributions (Metz and Diekmann (1986)\nocite{metzdiek}). This is one of the cornerstones for differential equations modeling in general. Complicated models for processes interacting in a complex manner may be built up, and not only in population dynamics. The principle holds as long as the various contributions are taken into account exactly. In this paper we discuss commonly used approximations that may lead to dependency terms affecting the long run qualitative behavior of the involved equations. We prove that these terms do not produce such effects in the simplest and most interesting biological case, but the general case is left open.
\end{abstract}

\section{Introduction}
A number of ecological phenomena can be studied under chemostat conditions, cf. Smith and Waltman (1995)\nocite{smithchemo}. For instance, a lake ecology may have rivers transferring a limiting resource to the lake and rivers diluting this resource. A phenomenological model containing such a situation is given by
\begin{eqnarray}
\dot{s}&=&CD-Ds-\frac{axs}{1+abs},\nonumber\\
\dot{x}&=&\frac{maxs}{1+abs}-Dx-\frac{Axy}{1+ABx},\label{chemostat}\\
\dot{y}&=&\frac{AMxy}{1+ABx}-Dy.\nonumber
\end{eqnarray}
Here $s>0$ is the substrate, $x>0$ is the prey having the substrate $s$ as its limiting resource and $y>0$ is a predator feeding on the prey $x$. The parameters $C>0$, $D>0$, $a>0$, $b>0$, $m>0$, $A>0$, $B>0$, and $M>0$ stand for concentration, dilution rate, search rate for the prey, handling time for the prey (cf. Holling (1959)\nocite{Holling.CanEnt:91}), conversion factor for the prey, search rate for the predator, handling time for the predator and conversion factor for the predator, respectively.

A logistic approximation is often made when studying (\ref{chemostat}) and related systems, cf. Kooi, Boer, and Kooijman (1998)\nocite{Kooi.BoMB:60} and references therein. More precisely, consider the function
\begin{displaymath}
H(s,x,y)=ms+x+\frac{y}{M}-mC.
\end{displaymath}
It satisfies $\dot{H}=-DH$ meaning that the surface $H=0$ is asymptotically invariant for (\ref{chemostat}). A study of the system on this surface allows for reducing it to a planar predator-prey system as follows
\begin{eqnarray}
\dot{x}&=&\frac{ax(mC-x-\frac{y}{M})}{1+\frac{ab}{m}(mC-x-\frac{y}{M})}-Dx-\frac{Axy}{1+ABx}\nonumber\\
\dot{y}&=&\frac{AMxy}{1+ABx}-Dy
\label{surface_pp}
\end{eqnarray}
If $b\neq 0$, then the growth function is non-logistic and given by
\begin{equation}
h(x)=\frac{ax(mC-x)}{1+\frac{ab}{m}(mC-x)}
\label{nonlogi}
\end{equation}
in the absence of predators. Its vertical asymptote is located at $x=mC+\frac{m}{ab}$, so it is non-negative in $[0,mC]$. It has exactly one extreme value on this interval and it is given by
\begin{displaymath}
x=\frac{mC(1+abC)}{1+abC+\sqrt{1+abC}}.
\end{displaymath}
This value tend to $mC/2$ as $b\rightarrow 0$. Thus, $h$ is unimodal on the relevant interval $[0,mC]$ and no qualitative differences can occur in the single species case when doing logistic approximations. Equivalents of the predator-prey system (\ref{surface_pp}) have been studied in Smith and Waltman (1995)\nocite{smithchemo} and Kuang (1989)\nocite{kuang}. The results were that local stability implies global stability and that uniqueness of limit cycles was proved for a certain range of parameters. However, it is still not known whether the limit cycle is unique for all parameters of the system.

If the handling time for the prey is small $b\approx 0$, the denominator in the first term of the first equation in (\ref{surface_pp}) vanishes and a logistic approximation is often made for the growth rate of the prey feeding on the substrate. We get
\begin{eqnarray}
\dot{x}&=&ax(mC-x)-Dx-\frac{axy}{M}-\frac{Axy}{1+ABx},\nonumber\\
\dot{y}&=&\frac{AMxy}{1+ABx}-Dy\label{ppsyst}.
\end{eqnarray}
In this context, the system
\begin{eqnarray}
\dot{x}&=&ax(mC-x)-Dx-\frac{Axy}{1+ABx},\nonumber\\
\dot{y}&=&\frac{AMxy}{1+ABx}-Dy\label{ppknown}.
\end{eqnarray}
has often been used as an approximation of the system (\ref{ppsyst}). Note that the coupling term $\frac{axy}{M}$ is set to zero in this procedure, too. This is entirely in concordance with well-established principles in differential equation modeling. As soon as logistic growth is accepted for the prey-population $x$, then one is expected to add the contributions of the predation process to the resulting equations. The equations (\ref{ppknown}) contain the result of such a procedure. The long-run qualitative dynamical properties including possibilities for limit cycle bifurcations of (\ref{ppknown}) are well-known trough a number of well-established results (cf. Cheng, Hsu, and Lin (1981)\nocite{chengfyra}, Liou and Cheng (1988)\nocite{chengtre} and Kuang and Freedman (1988)\nocite{kuangfyra}). The objective of this paper is to elucidate the influence of the above coupling term on those results. The numerical part of the work by Kooi, Boer, and Kooijman (1998)\nocite{Kooi.BoMB:60} reported differences in fixed point bifurcations for still longer food-chains as consequences of similar simplifications.

The methods used in this paper are the Lyapunov functions introduced by Ardito and Ricciardi (1995)\nocite{Ardito.JoMB:33} and LaSalle´s (1960)\nocite{LaSalle.IRETCT:7} theorem, cf. Hale (1969)\nocite{hale}, Hirsch, Smale, and Devaney (2013)\nocite{hirschsmaledevaney}, and Wiggins (2003)\nocite{wiggins2003}. We use the Kuang and Freedman (1988)\nocite{kuangfyra} version Zhang's (1986)\nocite{ZhangZF.ApplAnal:23} theorem (see also Cherkas and Zhilevich (1970)\nocite{Cherkas.DE:6}) for proving uniqueness of limit cycles. The result is that the qualitative properties of the bifurcation diagram are robust with respect to the coupling term mentioned above.

However, it is not straightforward to do the estimates required for application of these methods in particular cases. We have therefore limited the presentation here to the simplest and most interesting case (Logistic growth combined with Holling (1959)\nocite{Holling.CanEnt:91} specialist predator functional response). In the presentation of the proofs we have focused on finishing the relevant inequalities to the best forms whereas chains of equalities that involve extensive but elementary algebra have been given low priority. Our impression is that equalities are more straightforward to check than inequalities.

\section{Reparametrization}

We begin our study of the qualitative differences between the systems (\ref{ppknown}) and (\ref{ppsyst}) by reparametrizing the systems in order to eliminate some of the parameters involved (cf. Keener (1983)\nocite{keener}). We put
\begin{eqnarray*}
\xi&=&\frac{a}{amC-D}x\\
\eta&=&\frac{A}{amC-D}y\\
\tau&=&(amC-D)t\\
\epsilon&=&\frac{a}{MA}\\
\beta&=&\frac{AB}{a}(amC-D)\\
\mu&=&\frac{A}{a}(M-BD)\\
\lambda&=&\frac{Da}{A(amC-D)(M-BD)}
\end{eqnarray*}
and the system under consideration takes the form
\begin{eqnarray}
\dot{\xi}&=&\xi(1-\xi)-\epsilon\xi\eta-\frac{\xi\eta}{1+\beta\xi},\nonumber\\
\dot{\eta}&=&\eta\mu\frac{\xi-\lambda}{1+\beta\xi}.\label{pp_red}
\end{eqnarray}
The known system (\ref{ppknown}) corresponds to (\ref{pp_red}) with $\epsilon=0$ and seven parameters have been reduced to four. We assume in the rest of the paper that all parameters and variables are non-negative. The parameters $C$ and $D$ were those used as bifurcation parameters in Kooi, Boer, and Kooijman (1998)\nocite{Kooi.BoMB:60}.

\section{Isocline form and equilibria}

Most of the theorems and results regarding the system (\ref{pp_red}) are formulated for its isocline form
\begin{eqnarray}
\dot{\xi}&=&f(\xi)(F(\xi)-\eta),\nonumber\\
\dot{\eta}&=&\eta\psi(\xi),\label{pp_iso}
\end{eqnarray}
$\xi,\eta\geq 0$. The involved functions are required to meet some conditions. We specify them as follows:
\begin{itemize}
\item[(A-I)] $f$, $\psi$, and $F$ are $C^1([0,\infty))$,
\item[(A-II)] $f(0)=0$, $f(\xi)>0$ for $\xi>0$,
\item[(A-III)] $(\xi-1)F(\xi)<0$ for $\xi\neq 1$
\item[(A-IV)] $(\xi-\lambda)\psi(\xi)>0$, $\xi\neq\lambda>0$.
\end{itemize}
Condition (A-I) grants the local existence and uniqueness of solutions. In fact, only Lipschitz conditions are needed for this conclusion. However, we may need continuous derivatives of higher orders for some results below. These conditions are valid anyway for the specific system (\ref{pp_red}). Indeed, for (\ref{pp_red}), we have
\begin{eqnarray}
f(\xi)&=&\epsilon\xi+\frac{\xi}{1+\beta\xi},\nonumber\\
F(\xi)&=&\frac{(1+\beta\xi)(1-\xi)}{1+\epsilon+\epsilon\beta\xi},\label{spec_fcn}\\
\psi(\xi)&=&\mu\frac{\xi-\lambda}{1+\beta\xi}.\nonumber
\end{eqnarray}
These functions meet the conditions (A-I)-(A-IV) above. We first have
\begin{Theorem}
Assume \em (A-I)-(A-IV). \em The solutions of \em (\ref{pp_iso}) \em remain positive and bounded.
\label{dissipat_th}
\end{Theorem}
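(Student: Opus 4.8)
The plan is to treat three things in turn: positivity of the first quadrant, a cheap upper bound for the prey $\xi$, and the genuinely delicate upper bound for the predator $\eta$.

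First I would establish positivity by showing both axes are invariant and invoking uniqueness. Since $f(0)=0$ by (A-II), the right-hand side of the $\xi$-equation vanishes on $\{\xi=0\}$, so this line is invariant; by the uniqueness granted by (A-I) no trajectory with $\xi(0)>0$ can reach it, whence $\xi(t)>0$ for all $t$. For the predator the equation $\dot\eta=\eta\psi(\xi)$ is linear in $\eta$ with continuous coefficient along a solution, so
\begin{displaymath}
\eta(t)=\eta(0)\exp\!\Big(\int_0^t\psi(\xi(s))\,ds\Big),
\end{displaymath}
which is positive whenever $\eta(0)>0$. Hence the open first quadrant is positively invariant. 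Next, the prey is bounded above directly from (A-III): for $\xi>1$ we have $F(\xi)<0$, while $\eta\ge0$ and $f(\xi)>0$ by (A-II), so $\dot\xi=f(\xi)\big(F(\xi)-\eta\big)<0$ on $\{\xi>1\}$. Consequently $\xi(t)\le K:=\max\{1,\xi(0)\}$ for all $t\ge0$, and in fact $\xi$ enters and thereafter remains in $[0,1]$ after a finite time. This lets me treat every $\xi$-dependent quantity below as living on the compact interval $[0,K]$.

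The heart of the matter is to bound $\eta$, and here I would use the Ardito--Ricciardi-type auxiliary function
\begin{displaymath}
V(\xi,\eta)=G(\xi)+\eta,\qquad G(\xi)=\int_\lambda^\xi\frac{\psi(s)}{f(s)}\,ds .
\end{displaymath}
By (A-II) and (A-IV) the integrand has the sign of $s-\lambda$, so $G\ge0$ with $G(\lambda)=0$; in particular $\eta\le V$, and a bound on $V$ yields a bound on $\eta$. A direct computation collapses the coupling:
\begin{displaymath}
\dot V=\frac{\psi(\xi)}{f(\xi)}\dot\xi+\dot\eta=\psi(\xi)\big(F(\xi)-\eta\big)+\eta\psi(\xi)=\psi(\xi)F(\xi).
\end{displaymath}
Thus $\dot V$ depends on $\xi$ alone and, by (A-III)--(A-IV), is positive exactly on $\lambda<\xi<1$. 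The key dynamical observation is that this growth region is self-limiting: setting $F^\ast=\max_{[0,1]}F$, whenever $\eta>F^\ast$ one has $\dot\xi<0$ uniformly, so while the predator is large $\xi$ decreases \emph{monotonically}. Hence any excursion of $\eta$ above $F^\ast$ is single-humped, with its peak attained precisely as $\xi$ crosses $\lambda$, and I would bound that peak by integrating
\begin{displaymath}
\frac{dV}{d\xi}=\frac{\psi(\xi)F(\xi)}{f(\xi)\big(F(\xi)-\eta\big)}
\end{displaymath}
along the descent $\xi:\xi_s\to\lambda$, where $\xi_s\in(\lambda,1)$ is the entry point at which $\eta=F^\ast$.

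The main obstacle is precisely this last estimate: one must show the peak value of $\eta$ is controlled by a constant independent of the size of the excursion, i.e.\ that $\int_\lambda^{\xi_s}\psi F/\big(f(\eta-F)\big)\,d\xi$ is uniformly bounded. The delicate point is the turning behaviour near $\xi_s$, where $\eta-F$ is small; the estimate must exploit that $\eta$ stays strictly above $F^\ast$ throughout the hump so that the (integrable) near-singularity does not accumulate. This is exactly the kind of inequality the introduction warns must be finished to its best form. Once such a uniform bound is in hand, $V$---and hence $\eta$---is bounded, and boundedness together with the local existence from (A-I) upgrades to existence of all solutions for all $t\ge0$, completing the proof.
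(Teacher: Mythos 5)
Your positivity argument and the prey bound ($\dot\xi<0$ on $\xi>1$ by (A-II)--(A-III), so all solutions enter $0<\xi<1$) are correct and coincide with the paper's. The predator bound, however, is not a proof: the entire difficulty is concentrated in the claim that $\int_\lambda^{\xi_s}\psi F/\bigl(f(\eta-F)\bigr)\,d\xi$ is uniformly bounded over all excursions, and you explicitly leave that estimate open. It is a genuine obstacle, not a formality. With your choice $V=G(\xi)+\eta$ you get $\dot V=\psi(\xi)F(\xi)$, which carries \emph{no} negative dependence on $\eta$, so $\dot V$ is not eventually negative for large $\eta$ and you are forced into the excursion analysis; there the integrand is nearly singular at the entry point $\xi_s$ (where $\eta-F$ can vanish, e.g.\ when $\xi_s$ is a maximizer of $F$), and one must also handle initial conditions already lying above $F^\ast$ and verify uniformity over successive humps. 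None of this is carried out, so the boundedness claim is not established.

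The paper's proof shows how to avoid the problem entirely: take
\begin{displaymath}
V(\xi,\eta)=\int_\lambda^\xi\frac{d\xi_\ast}{f(\xi_\ast)}+\int_{F(\lambda)}^\eta\frac{d\eta_\ast}{\eta_\ast},
\end{displaymath}
i.e.\ weight the prey part by $1/f$ and use $\log\eta$ instead of $\eta$. Then $\dot V=F(\xi)-\eta+\psi(\xi)$, and since $F+\psi$ is bounded on $\lambda\le\xi\le 1$ this is negative as soon as $\eta$ is large --- the decisive $-\eta$ term comes from the prey equation and requires no estimate along trajectories. Because $\int^\eta d\eta_\ast/\eta_\ast$ diverges, every level curve $V=\kappa$ closes off against the line $\xi=\lambda$, and on $0<\xi<\lambda$ one has $\dot\eta<0$ by (A-IV); a level curve with $\kappa$ large enough therefore bounds a trapping region. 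If you want to salvage your write-up, replace $G(\xi)+\eta$ by this $V$; otherwise you must actually prove the uniform integral bound you have only described.
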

\begin{proof} By uniqueness of solutions, the solutions of (\ref{pp_iso}) cannot intersect the solutions at $\eta=0$ and $\xi=0$. Therefore, the solutions remain positive. For boundedness, note first that all solutions enter the region $0<\xi<1$ by (A-II)-(A-III) and that we have $\dot{\eta}<0$ for $0<\xi <\lambda$ by (A-IV). Next consider the function (cf. Lindstr\"{o}m (1993)\nocite{uppsats})
\begin{displaymath}
V(\xi,\eta)=\int_\lambda^\xi\frac{d\xi_\ast}{f(\xi_\ast)}+
\int_{F(\lambda)}^\eta\frac{d\eta_\ast}{\eta_\ast}.
\end{displaymath}
The last integral diverges as $\eta\rightarrow\infty$. Therefore, all level curves of this function intersect the line $\xi=\lambda$. The derivative along the solutions of (\ref{pp_iso}) for the above function is
\begin{displaymath}
\dot{V}=F(\xi)-\eta+\psi(\xi).
\end{displaymath}
Since $F(\xi)+\psi(\xi)$ is bounded on $\lambda<\xi<1$, this quantity is negative for $\eta$ large enough. That is, select $\kappa$ such that $\dot{V}<0$ for $V(\xi,\eta)>\kappa$ and we have a trapping region ensuring bounded solutions according to Figure \ref{kurva}(a).
\end{proof}
\begin{remark}\em
Because all positive solutions enter the region $0<\xi<1$ by the above argument, we usually check the conditions of the theorems alluded to below in that interval.
\em\end{remark}
We make a summary of the local bifurcation results.
\begin{Theorem}
Assume \em (A-I)-(A-IV). \em If $\lambda\geq 1$, the system \em (\ref{pp_iso}) \em has two equilibria, the origin and $(1,0)$. The origin is a saddle and $(1,0)$ is globally asymptotically stable. If $0<\lambda<1$, the system \em (\ref{pp_iso}) \em has three equilibria, the origin, $(1,0)$, and $(\lambda, F(\lambda))$. The first two are saddles and the last one is asymptotically stable when $F^\prime(\lambda)<0$ and unstable when $F^\prime(\lambda)>0$.
\label{localstab}
\end{Theorem}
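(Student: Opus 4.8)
The plan is to separate the purely algebraic location of the equilibria from their stability, and to treat the global statement by a monotonicity-plus-invariance-principle argument. First I would locate the equilibria by solving $f(\xi)(F(\xi)-\eta)=0$ together with $\eta\psi(\xi)=0$. By (A-II) the first equation forces either $\xi=0$ or $\eta=F(\xi)$, while the second forces $\eta=0$ or $\xi=\lambda$. Checking the four combinations against the sign conditions (on $\xi=0$ one has $\psi(0)<0$ by (A-IV), so necessarily $\eta=0$) yields the origin and $(1,0)$ unconditionally, using $F(1)=0$ from (A-III). The only candidate interior equilibrium is $(\lambda,F(\lambda))$, and by (A-III) the value $F(\lambda)$ is positive exactly when $\lambda<1$ and nonpositive when $\lambda\geq 1$; this produces the two cases in the statement and the coalescence of $(\lambda,F(\lambda))$ with $(1,0)$ at $\lambda=1$.

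For the local behaviour I would compute the Jacobian of the right-hand side and evaluate it at each equilibrium. At the origin it is diagonal with entries $f'(0)F(0)$ and $\psi(0)$; since $F(0)>0$ and $\psi(0)<0$ (with $f'(0)>0$ for the functions at hand), the two eigenvalues have opposite signs and the origin is a saddle. Independently, this saddle structure is visible from the invariance of the $\xi$-axis and the $\eta$-axis, with the flow leaving the origin along the former ($\dot\xi=f(\xi)F(\xi)>0$ for $0<\xi<1$) and entering along the latter ($\dot\eta=\eta\psi(0)<0$). At $(1,0)$ the Jacobian is upper triangular with eigenvalues $f(1)F'(1)\leq 0$ and $\psi(1)$; because the sign of $\psi(1)$ agrees with the sign of $1-\lambda$ by (A-IV), the point $(1,0)$ is a saddle when $0<\lambda<1$ and locally stable when $\lambda\geq 1$.

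The interior equilibrium $(\lambda,F(\lambda))$, present only for $0<\lambda<1$, is handled by trace--determinant analysis. There $\psi(\lambda)=0$ and $F(\xi)-\eta$ vanishes, so the Jacobian has trace $f(\lambda)F'(\lambda)$ and determinant $f(\lambda)F(\lambda)\psi'(\lambda)$. Since $f(\lambda)>0$, $F(\lambda)>0$, and $\psi'(\lambda)>0$ (the latter because $\psi$ crosses zero from below at $\lambda$ by (A-IV)), the determinant is positive, which rules out a saddle, and the sign of the trace coincides with the sign of $F'(\lambda)$. This gives asymptotic stability when $F'(\lambda)<0$ and instability when $F'(\lambda)>0$, exactly as claimed.

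The main obstacle is the \emph{global} asymptotic stability of $(1,0)$ when $\lambda\geq 1$, for which linearization is not enough. Here I would use that every positive solution eventually enters the region $0<\xi<1$ (established in the proof of Theorem~\ref{dissipat_th} and the remark following it). On that region $\xi<\lambda$, so $\psi(\xi)<0$ and $\dot\eta=\eta\psi(\xi)\leq 0$, whence $V=\eta$ is a Lyapunov function. LaSalle's invariance principle then confines each $\omega$-limit set to the largest invariant subset of $\{\dot\eta=0\}$, which inside the trapping region reduces to the invariant segment $\eta=0$, $0\leq\xi\leq 1$; on that segment $\dot\xi=f(\xi)F(\xi)$ drives $\xi$ to $1$, while the saddle at the origin is excluded because its stable manifold is the $\eta$-axis, unreachable from the interior. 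The delicate point is the boundary case $\lambda=1$, where $\{\xi=\lambda\}$ touches the region; there one checks that trajectories cross $\xi=1$ inward, since $\dot\xi=-f(1)\eta<0$ for $\eta>0$, so the only invariant set on $\xi=1$ is $(1,0)$ and the conclusion persists.
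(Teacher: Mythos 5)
Your proposal is correct, and the local analysis (equilibria from the isoclines, diagonal/triangular Jacobians at the origin and at $(1,0)$, trace--determinant criterion at $(\lambda,F(\lambda))$ with trace $f(\lambda)F'(\lambda)$ and determinant $f(\lambda)F(\lambda)\psi'(\lambda)>0$) coincides with the paper's argument. Where you genuinely diverge is the global asymptotic stability of $(1,0)$ for $\lambda\geq 1$: the paper disposes of this by pointing to the phase portrait in Figure~\ref{kurva}(b) (a geometric argument in the strip $0<\xi<1$, where $\dot\eta<0$ and the flow is funnelled toward $(1,0)$), whereas you make this precise with the Lyapunov function $V=\eta$ and LaSalle's invariance principle, reducing the $\omega$-limit set to the invariant segment $\eta=0$, $0\le\xi\le 1$, and then excluding the origin via its stable manifold (the $\eta$-axis, which positive trajectories cannot reach, and whose backward orbits are unbounded so a Butler--McGehee-type argument applies). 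Your version is the more complete one; the only caveats are mild nonhyperbolicity issues that the paper also glosses over --- under the bare hypotheses (A-I)--(A-IV) one could have $f'(0)=0$, $F'(1)=0$ or $\psi'(\lambda)=0$, so the eigenvalue signs you quote should strictly be read for the concrete functions (\ref{spec_fcn}), where all the relevant quantities are nonzero; your axis-invariance argument at the origin conveniently sidesteps the first of these. The treatment of the boundary case $\lambda=1$ (checking $\dot\xi=-f(1)\eta<0$ on $\xi=1$) is a detail the paper omits and is worth keeping.
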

\begin{proof}
Existence and number of equilibria follow from (A-II)-(A-IV). The Jacobian matrix of (\ref{pp_iso}) takes the form
\begin{displaymath}
J(\xi,\eta)=\left(\begin{array}{cc}
f^\prime(\xi)F(\xi)+f(\xi)F^\prime(\xi)-\eta f^\prime(\xi) & -f(\xi)\\
\eta\psi^\prime(\xi) & \psi(\xi)
\end{array}\right)
\end{displaymath}
meaning that the Jacobian matrixes evaluated at the first two equilibria take diagonal or triangular forms. Figure \ref{kurva}(b) demonstrates the techniques used for proving global asymptotic stability in the case $\lambda\geq 1$. The stability properties of the last equilibrium follows from the trace-determinant criterion, see e. g. Hirsch, Smale, and Devaney (2013)\nocite{hirschsmaledevaney} or Strang (1988)\nocite{Strang.Linear}.
\end{proof}
The stability properties of the last equilibrium are usually referred to as the Rosenzweig-MacArthur (1963)\nocite{rosenzweig} criterion. It shows that the properties of $F^\prime$ are important. Returning to (\ref{spec_fcn}), this function can be written in two forms
\begin{equation}
F^{\prime}(\xi)=\frac{\beta-1-\epsilon-2\beta\xi-2\beta\epsilon\xi-\beta^2\epsilon\xi^2}{(1+\epsilon+\epsilon\beta\xi)^2}=\frac{-(\xi-\xi_+)(\xi-\xi_-)}{\epsilon\left(\xi-\frac{\xi_++\xi_-}{2}\right)^2},
\label{fprime}
\end{equation}
where the latter one makes use of the zeroes
\begin{displaymath}
\xi_{\pm}=\frac{-1-\epsilon\pm\sqrt{1+\epsilon+\beta\epsilon}}{\beta\epsilon}
\end{displaymath}
of the numerator. We note that $\xi_{-}<0$. Moreover, $\xi_+$ is a strictly increasing function of $\beta$ that takes the value zero when $\beta=1+\epsilon$. Thus, the numerator of (\ref{fprime}) has a positive zero if $\beta>1+\epsilon$, otherwise not.

Theorem \ref{localstab} implies that $\lambda$ can be used as a bifurcation parameter and $\lambda\geq 1$ implies $(1,0)$ locally stable, $\max(0,\xi_+)<\lambda\leq 1$ implies $(\lambda,F(\lambda))$ to be locally stable, and
\begin{equation}
0<\lambda<\xi_+
\label{all_eq_unst}
\end{equation}
implies that all equilibria are unstable. Standard analysis gives now that
\begin{displaymath}
\xi_+<\lim_{\epsilon\rightarrow 0}\xi_+=\frac{\beta-1}{2\beta}<\frac{1}{2}.
\end{displaymath}
This relates our results to (\ref{pp_red}) with $\epsilon=0$. The coupling term in the system (\ref{pp_red}) therefore stabilizes the system locally. If the term containing $\epsilon>0$ is present, then $\lambda$ must be smaller in order to ensure possibilities for all non-negative equilibria to be unstable. The local bifurcation routes are qualitatively the same (as $\lambda$ decreases, possible destabilization occurs), only quantitative differences occur (the parameter $\lambda$ must be still smaller to ensure destabilization to take place as $\epsilon>0$). This phenomenon is usually called the "Paradox of enrichment" in the biological literature, cf. Rosenzweig (1971)\nocite{Rosenzweig.Science:171}. We conclude this section by proving:
\begin{lemma}
Assume \em (A-I)-(A-IV) \em and $0<\lambda<1$. All solutions of \em (\ref{pp_iso}) \em starting in the positive quadrant must have some part of the cross-section $\xi=\lambda$ and $0<\eta\leq F(\lambda)$ in its $\omega$-limit set.
\label{cross_sec_thm}
\end{lemma}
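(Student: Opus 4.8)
The plan is to exploit the nullcline geometry of (\ref{pp_iso}) to show that every positive orbit rotates counterclockwise about the interior equilibrium $E=(\lambda,F(\lambda))$, and then to read off the conclusion from the two outcomes permitted by the Poincar\'e--Bendixson theorem. First I would record the sign pattern of the vector field: by (A-II)-(A-IV) the prey nullcline is $\eta=F(\xi)$ and the nontrivial predator nullcline is the vertical line $\xi=\lambda$, and these split the strip $0<\xi<1$ (into which all orbits enter, by Theorem \ref{dissipat_th} and the remark following it) into four cells on which $(\dot\xi,\dot\eta)$ realizes the four sign combinations. On the line $\xi=\lambda$ one has $\dot\eta=\eta\psi(\lambda)=0$ and $\dot\xi=f(\lambda)(F(\lambda)-\eta)$, so the line is crossed strictly rightwards on the lower segment $S=\{\xi=\lambda,\ 0<\eta<F(\lambda)\}$ and strictly leftwards on the upper segment $\{\xi=\lambda,\ \eta>F(\lambda)\}$, with $E$ the only point of tangency. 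This is the precise sense in which the orbits turn counterclockwise.

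The core step is a dichotomy on the behaviour of $\xi(t)-\lambda$. Suppose first that it is eventually of one sign. If eventually $\xi>\lambda$, then $\dot\eta>0$ by (A-IV), so $\eta$ increases to a limit $\eta_\infty>0$; invariance of the $\omega$-limit set then forces $\psi\equiv0$ there, whence $\omega=\{E\}$. If eventually $\xi<\lambda$, then $\eta$ decreases to some $\eta_\infty\geq0$; the case $\eta_\infty>0$ again yields $\omega=\{E\}$, while $\eta_\infty=0$ is impossible, because on $\{\eta=0\}$ one has $\dot\xi=f(\xi)F(\xi)>0$ for $0<\xi<1$, so the only invariant subset of $\{\xi\leq\lambda,\ \eta=0\}$ is the origin, and a positive orbit cannot converge to the origin (a saddle whose stable manifold is the $\eta$-axis). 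In the complementary case $\xi(t)-\lambda$ changes sign infinitely often; since a passage from $\xi<\lambda$ to $\xi>\lambda$ can occur only through $S$, the orbit meets $S$ at times $t_k\to\infty$, and the crossing points, lying in the bounded set $\overline S$, accumulate at some $p^\ast=(\lambda,\eta^\ast)$ with $0\leq\eta^\ast\leq F(\lambda)$ and $p^\ast\in\omega$. In the convergence case $E\in\omega$ already lies in the asserted cross-section $\{\xi=\lambda,\ 0<\eta\leq F(\lambda)\}$, so what remains is the oscillatory case together with the strict inequality $\eta^\ast>0$.

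The hard part will be excluding $\eta^\ast=0$, that is, preventing the crossings from drifting down to the corner $(\lambda,0)$; equivalently, showing that $\omega$ stays bounded away from the axes. I would obtain this from a persistence argument. If $(\lambda,0)\in\omega$, then by invariance $\omega$ contains the entire $\xi$-axis orbit through $(\lambda,0)$, hence the boundary equilibria $(1,0)$ and the origin. A Butler--McGehee analysis at the saddle $(1,0)$---whose unstable manifold enters the open quadrant because $\psi(1)>0$ (recall $\lambda<1$), while its stable manifold is the $\xi$-axis---combined with the acyclicity of the boundary flow (the origin and $(1,0)$ are joined only by the $\xi$-axis heteroclinic, with no return, since positive orbits cannot reach the invariant $\eta$-axis) rules out any boundary graphic as an $\omega$-limit set. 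Consequently $\omega$ is contained in the open quadrant, $\eta^\ast>0$, and $p^\ast\in S$. The rotation bookkeeping and the dichotomy are routine once the sign pattern is fixed; this persistence step is where the real work lies.

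Finally I would confirm the conclusion through the admissible forms of $\omega$ given by Poincar\'e--Bendixson on the positively invariant, dissipative region of Theorem \ref{dissipat_th}. If $\omega$ is a single equilibrium it must be $E$, which lies in the cross-section; if $\omega$ is a periodic orbit it must encircle $E$ (the unique interior equilibrium, of index $+1$) and hence cross the lower segment $S$ at a point with $0<\eta<F(\lambda)$. In either case $\omega$ meets $\{\xi=\lambda,\ 0<\eta\leq F(\lambda)\}$, which is the assertion.
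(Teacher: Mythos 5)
Your proof is correct and, in its final paragraph, coincides with the paper's own argument: the Butler--McGehee theorem (together with the observation that the unstable manifold of $(1,0)$ cannot return to the boundary) excludes $(1,0)$ and the origin from any $\omega$-limit set, Poincar\'e--Bendixson then leaves only the equilibrium $(\lambda,F(\lambda))$ or a periodic orbit encircling it by index theory, and either alternative meets the ray $\xi=\lambda$, $0<\eta\leq F(\lambda)$. The rotation/dichotomy bookkeeping in your middle paragraphs is sound but redundant once that endgame is in place.
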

\begin{proof}
By Theorem \ref{dissipat_th} all solutions remain bounded and positive. The unstable manifold of (1,0) can therefore not intersect the stable manifold of itself or the origin. The Butler-McGehee theorem (Smith and Waltman (1995)\nocite{smithchemo}) implies that no $\omega$-limit set can contain (1,0) or the origin. By the Poincar\'{e}-Bendixson theorem the $\omega$-limit set is either the fixed point $(\lambda,F(\lambda))$ or a limit cycle that must surround $(\lambda,F(\lambda))$ by index theory, see e. g. Jordan and Smith (1990)\nocite{jordan} or Dumortier, Llibre, and Art\'{e}s (2006)\nocite{Dumortierbook}. In both cases, trajectories starting in the positive cone have some part of the ray $\xi=\lambda$, $0<\eta\leq F(\lambda)$ in their limit set.
\end{proof}

\section{Global stability}

We now continue with global stability. Our results follow from the results in Smith and Waltman (1995)\nocite{smithchemo} in a limiting case but our method is different (and simpler) anyway. For $\theta\geq 0$, we introduce the range of Lyapunov functions
\begin{equation}
W_\theta(\xi,\eta) =\eta^\theta \int_{\lambda}^\xi\frac{\psi(\xi_\ast)}{f(\xi_\ast)} d\xi_\ast
+\int_{F(\lambda)}^{\eta}\eta_\ast^{\theta}\frac{\eta_\ast-F(\lambda)}{\eta_\ast}d\eta_\ast
\label{ArditoRiccLyap}
\end{equation}
see Ardito and Ricciardi (1995)\nocite{Ardito.JoMB:33} and Lindstr\"{o}m (2000)\nocite{szeged1}. We introduce
\begin{displaymath}
\bar{F}_\theta(\xi)=F(\lambda)-\theta\int_\lambda^\xi\frac{\psi(\xi_\ast)}{f(\xi_\ast)}d\xi_\ast
\end{displaymath}
and deduce that the total derivative of $W_\theta$ along the solutions of (\ref{pp_iso}) is given by
\begin{displaymath}
\dot{W}_\theta=\eta^{\theta}\psi(\xi)(F(\xi)-\bar{F}_\theta(\xi)).
\end{displaymath}
Therefore, the sign condition
\begin{equation}
\left(F(\xi)-\bar{F}_\theta(\xi)\right)(\xi-\lambda)>0,\: \xi\neq\lambda
\label{tknvillkor}
\end{equation}
implies that LaSalle's (1960)\nocite{LaSalle.IRETCT:7} invariance principle can be applied. We also note that $W_\theta$ is a first integral of
\begin{eqnarray}
\dot{\xi}&=&f(\xi)\left(\bar{F}_\theta(\xi)-\eta\right)\nonumber\\
\dot{\eta}&=&\eta\psi(\xi)\label{rotsyst}
\end{eqnarray}
and that (\ref{rotsyst}) is a rotation of the vector field (\ref{pp_iso}) if (\ref{tknvillkor}), $\xi,\eta>0$, cf. Ye et. al. (1986)\nocite{ye}. The sign condition (\ref{tknvillkor}) means basically that the prey-isoclines of the systems (\ref{pp_iso}) and (\ref{rotsyst}) are compared to each other. For $\theta=0$, all solutions (\ref{rotsyst}) in the positive quadrant are closed orbits, and hence the level curves of (\ref{ArditoRiccLyap}), see Figure \ref{kurva}(c). For $\theta>0$, (\ref{rotsyst}) has non-closed solutions in the positive cone. However, all solutions intersecting the ray $\xi=\lambda$, $0<\eta\leq F(\lambda)$ are closed orbits. See Figure \ref{kurva}(d) and cf. Lemma \ref{cross_sec_thm}.

If $F(\xi)-\bar{F}_\theta(\xi)$ decreases, then the sign-condition (\ref{tknvillkor}) is clearly satisfied. We differentiate and obtain the requirement (cmp. e. g. (\ref{spec_fcn}) and (\ref{fprime}))
\begin{eqnarray*}
F^\prime(\xi)-\bar{F}_\theta^\prime(\xi)&=&F^\prime(\xi)+\theta\frac{\psi(\xi)}{f(\xi)}\\
&=&\frac{-(\xi-\xi_+)(\xi-\xi_-)}{\epsilon\left(\xi-\frac{\xi_++\xi_-}{2}\right)^2}+
\frac{\theta\mu(\xi-\lambda)}{\epsilon\beta\xi\left(\xi-\frac{\xi_++\xi_-}{2}\right)}<0,\\
\end{eqnarray*}
$0<\xi<1$. If $\beta>1+\epsilon$, we can choose $\theta\mu=2\xi_+\beta>0$ and the above inequality is equivalent to
\begin{eqnarray*}
-\xi^3+(3\xi_++\xi_-)\xi^2+(-2\xi_+\xi_--\xi_+^2-2\xi_+\lambda)\xi+\xi_+\lambda(\xi_++\xi_-)&=&\\
2\xi_+(\xi_+-\lambda)\left(\xi-\frac{\xi_++\xi_-}{2}\right)-(\xi-\xi_+)^2\left(\xi-(\xi_-+\xi_+)\right)&<&0,
\end{eqnarray*}
on $0<\xi<1$. It is straightforward to check the equality above and it can be derived us\-ing a Taylor expansion around $\xi=\xi_+$. The inequality holds since $\lambda\geq\xi_+$ and
\begin{displaymath}
\xi_++\xi_-<\frac{\xi_++\xi_-}{2}<0<\xi<1.
\end{displaymath}
We can thus, formulate the following theorem
\begin{Theorem}
Consider \em (\ref{pp_iso}) \em with \em (\ref{spec_fcn}). \em If either
\begin{itemize}
\item[\em(a)\em] $\xi_+\leq 0<\lambda$ or
\item[\em(b)\em] $0<\xi_+\leq\lambda$
\end{itemize}
then $(\min(\lambda,1), \max(0,F(\lambda))) $ is globally asymptotically stable in the cone $\xi>0$, $\eta>0$.
\end{Theorem}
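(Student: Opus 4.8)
The plan is to produce, in each of the two cases, a member of the Ardito--Ricciardi family $W_\theta$ from (\ref{ArditoRiccLyap}) that serves as a global Lyapunov function, and then to close the argument with LaSalle's invariance principle together with the boundedness already supplied by Theorem \ref{dissipat_th}. First I would dispose of the degenerate situation: if $\lambda\geq 1$ then (A-III) gives $F(\lambda)\leq 0$, so $(\min(\lambda,1),\max(0,F(\lambda)))=(1,0)$, and global asymptotic stability is already contained in Theorem \ref{localstab}. Hence I may assume $0<\lambda<1$, in which case $F(\lambda)>0$ and the target equilibrium is the interior point $(\lambda,F(\lambda))$; note that $W_\theta$ is continuous, bounded below, and attains a strict minimum there for every $\theta\geq 0$.

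For the choice of $\theta$ I would split along the two hypotheses exactly as stated. In case (a), $\xi_+\leq 0$ is equivalent to $\beta\leq 1+\epsilon$, and then the factorization (\ref{fprime}) shows $F'(\xi)<0$ for all $\xi\in(0,1)$, since there $\xi-\xi_+>0$ and $\xi-\xi_->0$; taking $\theta=0$ makes $\bar F_0\equiv F(\lambda)$, so $F-\bar F_0=F-F(\lambda)$ is strictly decreasing and vanishes only at $\xi=\lambda$. In case (b), $0<\xi_+\leq\lambda$ forces $\beta>1+\epsilon$, and I would invoke precisely the computation carried out just before the theorem: with $\theta\mu=2\xi_+\beta>0$ the requirement $F'-\bar F_\theta'<0$ reduces to the displayed cubic inequality, which is negative on $0<\xi<1$ because $\lambda\geq\xi_+$ and $\xi_++\xi_-<0<\xi$. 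In either case $F-\bar F_\theta$ decreases through its single zero at $\xi=\lambda$, which is the sign condition (\ref{tknvillkor}); consequently $\dot W_\theta=\eta^\theta\psi(\xi)(F(\xi)-\bar F_\theta(\xi))\leq 0$ throughout the open cone.

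To finish, I would verify that $(\lambda,F(\lambda))$ is the largest invariant subset of $\{\dot W_\theta=0\}$ inside $\xi,\eta>0$. Since $\eta>0$ and $F-\bar F_\theta$ has $\xi=\lambda$ as its only zero there, the set $\dot W_\theta=0$ reduces to the line $\xi=\lambda$; on that line $\dot\xi=f(\lambda)(F(\lambda)-\eta)\neq 0$ unless $\eta=F(\lambda)$, so no trajectory can stay there except at the equilibrium. Theorem \ref{dissipat_th} furnishes a compact positively invariant region absorbing all positive initial data, and LaSalle's invariance principle then yields global asymptotic stability in the cone $\xi>0,\eta>0$.

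I expect the genuine obstacle to be the monotonicity step in case (b), that is, turning $F'-\bar F_\theta'<0$ into a usable sign statement. The difficulty is twofold: one must first pin down the distinguished value $\theta\mu=2\xi_+\beta$, so that clearing denominators in $F'+\theta\psi/f$ yields a cubic with a factor vanishing to second order at $\xi_+$ rather than a less tractable expression, and then recognize the Taylor-type factorization $2\xi_+(\xi_+-\lambda)(\xi-\frac{\xi_++\xi_-}{2})-(\xi-\xi_+)^2(\xi-(\xi_-+\xi_+))$, which renders both summands manifestly non-positive once $\lambda\geq\xi_+$ and $\xi_-+\xi_+<0$ are used. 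Checking the underlying chain of equalities is elementary but lengthy; everything downstream of that inequality, including the reduction to case (a) and the LaSalle assembly, is routine.
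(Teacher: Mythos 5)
Your overall strategy is the paper's own: the Ardito--Ricciardi family $W_\theta$, the choice $\theta=0$ when $\beta\leq 1+\epsilon$ and $\theta\mu=2\xi_+\beta$ when $\beta>1+\epsilon$, the monotonicity of $F-\bar F_\theta$ via the cubic factorization, and LaSalle to finish. You also correctly identified the genuinely hard step (the case (b) inequality) and the reduction of $\lambda\geq 1$ to Theorem \ref{localstab}. But there is a real gap in your final LaSalle assembly for case (b). You claim that inside $\xi,\eta>0$ the set $\{\dot W_\theta=0\}$ reduces to the line $\xi=\lambda$ and then apply LaSalle on the compact absorbing region furnished by Theorem \ref{dissipat_th}. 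That region is compact only after taking its closure, and its closure meets the axis $\eta=0$. For $\theta>0$ the function $W_\theta$ extends continuously to $\eta=0$ (the integral $\int_{F(\lambda)}^{\eta}\eta_\ast^{\theta-1}(\eta_\ast-F(\lambda))\,d\eta_\ast$ converges as $\eta\to 0^+$), its level sets are therefore not compact in the open cone --- this is exactly what Figure \ref{kurva}(d) depicts --- and $\dot W_\theta=\eta^\theta\psi(F-\bar F_\theta)$ vanishes identically on $\eta=0$, which is an invariant line containing the equilibria $(0,0)$ and $(1,0)$. So the largest invariant subset of $\{\dot W_\theta=0\}$ in the compact set on which you invoke LaSalle is not $\{(\lambda,F(\lambda))\}$ alone, and LaSalle by itself only yields convergence to the union of that point with a piece of the $\xi$-axis.

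The paper closes this hole with Lemma \ref{cross_sec_thm}: Butler--McGehee excludes $(0,0)$ and $(1,0)$ from any $\omega$-limit set, and Poincar\'{e}--Bendixson then forces the limit set to meet the segment $\xi=\lambda$, $0<\eta\leq F(\lambda)$; since every level curve of $W_\theta$ through that segment is closed, the trajectory eventually enters a compact sublevel region strictly inside the open cone, and only there is LaSalle applied. You need this (or an equivalent argument ruling out the boundary) to make case (b) complete; case (a) with $\theta=0$ is unaffected, since $W_0\to+\infty$ at both axes and its sublevel sets are already compact in the open quadrant. A minor remark: the sign condition (\ref{tknvillkor}) as printed in the paper has the sign opposite to what ``$F-\bar F_\theta$ decreasing through its zero at $\lambda$'' delivers; your conclusion $\dot W_\theta\leq 0$ is the internally consistent one, and either semidefinite sign suffices for LaSalle, so this does not affect your argument.
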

\paragraph{Proof}
The case $\lambda\geq 1$ has been treated in Theorem \ref{localstab}. Assume therefore, $0<\lambda<1$. Put $\theta=\max(0,2\xi_+\beta/\mu)\geq 0$ in (\ref{ArditoRiccLyap}).
If $\beta\leq 1+\epsilon$, then $F$ is decreasing on $[0,1]$ and the choice $\theta=0$ is sufficient. All level curves of (\ref{ArditoRiccLyap}) are closed in the positive quadrant, see Figure \ref{kurva}(c), and global stability follows from LaSalle's (1960)\nocite{LaSalle.IRETCT:7} invariance principle. If $\beta>1+\epsilon$ then we use $\theta=2\xi_+\beta/\mu>0$ in (\ref{ArditoRiccLyap}). All level curves of (\ref{ArditoRiccLyap}) in the positive quadrant are not closed, see Figure \ref{kurva}(d), but all non-negative trajectories of (\ref{pp_iso}) with (\ref{spec_fcn}) will have some part of the cross-section $\xi=\lambda$ and $0<\eta\leq F(\lambda)$ in their limit set by Lemma \ref{cross_sec_thm}. If the limit set is $(\lambda,F(\lambda))$ we are done, and if not, the trajectory enters a closed level curve of (\ref{ArditoRiccLyap}). Global stability follows by LaSalle's invariance principle again. \qed

\section{Uniqueness limit cycles}

If $F^\prime(\lambda)>0$, then there exists at least one limit cycle for (\ref{pp_iso}) with (A-I)-(A-IV) due to the Poincar{\'{e}}-Bendixson Theorem and Theorem \ref{dissipat_th}. If $\epsilon=0$, then this limit cycle is unique by Theorem 4.3 in Kuang and Freedman (1988)\nocite{kuangfyra} for (\ref{pp_iso}) with (\ref{spec_fcn}). The theorem basically shows that Zhang's (1986)\nocite{ZhangZF.ApplAnal:23} theorem for Li{\'{e}}nard equations can be applied to the predator-prey system (\ref{pp_iso}) after a sequence of coordinate transformations. The conditions are stronger than (A-I)-(A-IV) in order to validate the coordinate transformations and uniqueness.

We shall use Kuang and Freedman's (1988)\nocite{kuangfyra} theorem to prove that the limit cycle is unique for $\forall\epsilon>0$. This makes it more difficult to verify its crucial condition for all relevant parameter values. Using (\ref{fprime}) it takes the form
\begin{equation}
\frac{d}{d\xi}\left(\frac{f(\xi)F^\prime(\xi)}{\psi(\xi)}\right)=
\frac{d}{d\xi}\left(\frac{\xi\left(\beta-1-\epsilon-2\beta\xi-2\beta\epsilon\xi-\beta^2\epsilon\xi^2\right)}{\mu(\xi-\lambda)(1+\epsilon+\beta\xi\epsilon)}\right)\leq 0
\label{Kuang_cond}
\end{equation}
for $\xi\neq\lambda$ in $0\leq\xi\leq1$. The criterion states that the divergence integrated around an assumed outer limit cycle is smaller than if it was integrated around an assumed inner limit cycle. The remaining conditions are satisfied for our selection of involved functions (\ref{spec_fcn}). It is sufficient to check that the numerator of (\ref{Kuang_cond}) is negative. In our case, this condition is equivalent to
\begin{eqnarray}
-\epsilon^2\beta^3\xi^4-2\epsilon\beta^2\left(1+\epsilon-\lambda\epsilon\beta\right)\xi^3&-&\nonumber\\
\beta\left((1+\epsilon)(2+\epsilon)+\epsilon\beta-5\lambda\epsilon\beta(1+\epsilon)\right)\xi^2&+&\label{Kuang_cond_eq}\\
4\lambda\beta\left(1+\epsilon\right)^2\xi-\lambda(1+\epsilon)\left(\beta-(1+\epsilon)\right)&\leq& 0\nonumber
\end{eqnarray}
that after Taylor expansion around $\xi=\lambda$ is equivalent to
\begin{eqnarray}
\beta\epsilon\lambda\left(\lambda+\frac{1+\epsilon}{\beta\epsilon}\right)(\lambda-\xi_-)(\lambda-\xi_+)&+&\nonumber\\
2\beta\epsilon\lambda(\lambda-\xi_-)(\lambda-\xi_+)(\xi-\lambda)&-&\label{org_Taylor_exp}\\
(1+\epsilon)\left(\lambda+\frac{1+\epsilon}{\beta\epsilon}+\frac{1}{\beta\epsilon}\right)(\xi-\lambda)^2&-&\nonumber\\
(\xi-\lambda)^2\left(1+2\beta\epsilon\left(\lambda+\frac{1+\epsilon}{\beta\epsilon}\right)(\xi-\lambda)+\beta\epsilon(\xi-\lambda)^2\right)&\leq&0.\nonumber
\end{eqnarray}
Straightforward computations verify the expressions above. The last term of (\ref{org_Taylor_exp}) is negative definite since
\begin{eqnarray*}
1+2\beta\epsilon\left(\lambda+\frac{1+\epsilon}{\beta\epsilon}\right)(\xi-\lambda)+\beta\epsilon(\xi-\lambda)^2&=&\\
1-2(1+\epsilon)\lambda-\beta\epsilon\lambda^2+2(1+\epsilon)\xi+\beta\epsilon\xi^2&>&\\
1-2(1+\epsilon)\lambda-\beta\epsilon\lambda^2&>&0.
\end{eqnarray*}
We remark that the last inequality holds because
\begin{displaymath}
\lambda<\xi_+<\frac{-1-\epsilon+\sqrt{1+2\epsilon+\epsilon^2+\beta\epsilon}}{\beta\epsilon}.
\end{displaymath}
It remains to prove that the three first terms of (\ref{org_Taylor_exp}) form a negative contribution. This quadratic expression has negative leading term and may be estimated from above by lines tangent to it at $\xi=0$ and $\xi=\lambda$. These two lines are given by
\begin{eqnarray*}
L_0(\xi)&=&\lambda\frac{1+\epsilon}{\epsilon\beta^2}\left(4\beta(1+\epsilon)\xi-\left(\beta-(1+\epsilon)\right)\right)\\
&=&\lambda\left(1+\epsilon\right)\left(-2(\xi_++\xi_-)\xi+\xi_+\xi_-\right)
\end{eqnarray*}
(from (\ref{Kuang_cond_eq}) and (\ref{fprime})) and
\begin{displaymath}
L_\lambda(\xi)=\beta\epsilon\lambda(\lambda-\xi_-)(\lambda-\xi_+)\left(-\lambda+\frac{1+\epsilon}{\beta\epsilon}+2\xi\right)
\end{displaymath}
(from (\ref{org_Taylor_exp})), respectively. We evaluate $L_0$ and $L_\lambda$ at points allowing for simple computations as follows
\begin{eqnarray*}
L_0\left(\frac{\xi_+}{2}\right)&=&-(1+\epsilon)\lambda\xi_+^2<0,\\
L_\lambda\left(\frac{\lambda}{2}\right)&=&\lambda(\lambda-\xi_-)(\lambda-\xi_+)\left(1+\epsilon\right)<0.
\end{eqnarray*}
Now, since $\lambda<\xi_+$, the three first terms in (\ref{org_Taylor_exp}) give a negative total contribution. Therefore, the limit cycle existing for $0<\lambda<\xi_+$ is unique. We have thus, proved:

\begin{Theorem}
Consider \em (\ref{pp_iso}) \em with \em (\ref{spec_fcn}). \em If $0<\lambda<\xi_+$ then \em (\ref{pp_iso}) \em possesses a unique limit cycle that is stable.
\end{Theorem}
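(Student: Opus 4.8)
The plan is to establish the three assertions — existence, uniqueness, and stability — in that order, since existence is soft while uniqueness carries all the weight. For existence I would first note that the hypothesis $0<\lambda<\xi_+$ is precisely the destabilization condition (\ref{all_eq_unst}): evaluating (\ref{fprime}) at $\xi=\lambda$ gives $F'(\lambda)>0$, because $(\lambda-\xi_+)<0$ while $(\lambda-\xi_-)>0$ (recall $\xi_-<0$) and the denominator is a square. Hence by Theorem \ref{localstab} the unique interior equilibrium $(\lambda,F(\lambda))$ is a repeller. Theorem \ref{dissipat_th} supplies a compact forward-invariant trapping region, and Lemma \ref{cross_sec_thm} (with the Butler--McGehee argument behind it) rules the two boundary equilibria out of any interior $\omega$-limit set; the Poincar\'{e}--Bendixson theorem then forces at least one closed orbit encircling $(\lambda,F(\lambda))$.

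For uniqueness I would invoke the Kuang and Freedman (1988) theorem for Li\'{e}nard-type predator--prey systems. After one checks that (\ref{spec_fcn}) meets that theorem's structural hypotheses, its sole substantive requirement is the monotonicity condition (\ref{Kuang_cond}), namely that $\xi\mapsto f(\xi)F'(\xi)/\psi(\xi)$ be nonincreasing on $[0,1]$ away from $\xi=\lambda$. Differentiating and clearing the (sign-definite) denominator, the sign of that derivative is controlled by the numerator, which is the quartic (\ref{Kuang_cond_eq}); so the entire uniqueness problem reduces to proving that this quartic stays $\le 0$ on $0\le\xi\le 1$.

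This reduction is where I expect the real difficulty, and it is exactly the ``estimates'' the introduction warns about: the quartic is not sign-definite by inspection and its coefficients mix $\epsilon$, $\beta$, and $\lambda$, so one must argue uniformly over the whole parameter range $0<\lambda<\xi_+$. My approach is to Taylor-expand the quartic about $\xi=\lambda$ — the natural base point because the factor $(\xi-\lambda)$ already sits in $\psi$ — producing the grouping (\ref{org_Taylor_exp}). I would first discard the genuinely quartic tail (the last line), whose bracket simplifies to $1-2(1+\epsilon)\lambda-\beta\epsilon\lambda^2+2(1+\epsilon)\xi+\beta\epsilon\xi^2$ and is therefore bounded below by $1-2(1+\epsilon)\lambda-\beta\epsilon\lambda^2>0$, the positivity coming from $\lambda<\xi_+$; this makes the tail $\le 0$.

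It then remains to control the first three terms of (\ref{org_Taylor_exp}), a downward-opening parabola $Q$. Because $Q$ is concave it lies below each of its tangent lines, so I would bound it above by the tangents $L_0$ at $\xi=0$ and $L_\lambda$ at $\xi=\lambda$ and reduce the matter to the signs of two lines — a one-variable check. Evaluating at convenient abscissae gives $L_0(\xi_+/2)=-(1+\epsilon)\lambda\xi_+^2<0$ and $L_\lambda(\lambda/2)=\lambda(\lambda-\xi_-)(\lambda-\xi_+)(1+\epsilon)<0$, and combining these values with the slopes of $L_0$ and $L_\lambda$ together with the inequality $\lambda<\xi_+$ forces $Q<0$ throughout $[0,1]$. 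With (\ref{Kuang_cond}) thereby verified, the Kuang and Freedman theorem delivers uniqueness; stability is then automatic, since in a dissipative planar flow a single limit cycle enclosing a repelling equilibrium must attract both the trajectories trapped between it and the equilibrium and those entering from outside.
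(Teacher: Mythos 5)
Your proposal is correct and follows essentially the same route as the paper: existence via Poincar\'{e}--Bendixson with dissipativity, uniqueness via the Kuang--Freedman criterion reduced to the quartic (\ref{Kuang_cond_eq}), the Taylor expansion (\ref{org_Taylor_exp}) about $\xi=\lambda$, positivity of the bracket in the quartic tail from $\lambda<\xi_+$, and the tangent-line bounds $L_0$, $L_\lambda$ evaluated at $\xi_+/2$ and $\lambda/2$. The only difference is cosmetic: you spell out slightly more explicitly how the signs and slopes of the two tangent lines combine (via $\lambda/2<\xi_+/2$) to cover all of $[0,1]$, a step the paper compresses into ``since $\lambda<\xi_+$.''
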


\section{Discussion}

We have considered the system (\ref{pp_red}) in this paper. This system was derived from a three-trophic level (resource-prey-predator) chemostat model possessing an asymptotically invariant plane that allows reduction to two trophic levels (prey and predator). If the prey is unsaturated, logistic growth follows but a coupling term affecting the predators functional response cannot be removed. We prove that $\lambda$ can be used as a bifurcation parameter and that $(1,0)$ is globally stable for $\lambda\geq 1$, $(\lambda,F(\lambda))$ is globally stable for $\xi_+\leq\lambda\leq 1$, and that a unique stable limit cycle exists for $0<\lambda<\xi_+$. The bifurcations occurring at $\lambda=1$, and at $\lambda=\xi_+$ are a transcritical bifurcation, and a supercritical Hopf bifurcation, respectively, see e. g. Guckenheimer and Holmes (1983)\nocite{guck} or Kuznetsov (1998)\nocite{Kuznetsovbook}. The result was that destabilizing bifurcation pattern is identical to the well-known case without coupling term, but that smaller values of $\lambda$ are needed for destabilization.

The analysis shows it to be far from trivial in general to deduce what kind of effects such coupling terms might have on the existence of possible limit cycles. Obvious cases that are left open are saturated prey (giving rise to a non-logistic growth function that depends on the predator), generalist predator functional responses (May (1976)\nocite{may:second} or discontinuous variants Krebs and Davies(1993)\nocite{Krebs.Introduction}), and group defence cases cf. Geritz and Gyllenberg (2012,2013)\nocite{Geritz.JTB:314,Geritz.JoMB:66}. In several cases, results granting global stability or uniqueness of limit cycles exist if logistic growth is assumed and coupling terms are neglected, see e. g. Lindstr\"{o}m (1989)\nocite{acta}, Hwang (2003,2004)\nocite{JMAA.Hwang:281,JMAA.Hwang:290}, and Qiu and Xiao (2013)\nocite{Qiu.NA.RWA:14}. Some authors are, however, critical to the foundation for some of the investigated functional responses, see e. g. Oksanen, Moen, and Lundberg (1992)\nocite{oksanentio}, Diehl, Lundberg, Gardfjell, Oksanen, and Persson (1993)\nocite{oksanenelva}, and Gleeson (1994)\nocite{Gleeson.Ecology:75}.

We remark that examples demonstrating complicated limit cycles bifurcations have been created without considering the dependency terms considered here, see e. g. Gonz{\'{a}}lez-Olivarez and Rojas-Palma (2011)\nocite{Gonzalez-Olivares.BoMB:73}. It is therefore not evident that uniqueness of limit cycles is granted in predator-prey systems when they exist or that local stability implies global stability. Since such results are not granted even when the dependency terms are neglected, makes it even more complicated to elucidate the long-term dynamical effects of these terms in the general case.

Our analysis shows that a detailed investigation of many of these cases require further development of the methods, but this is usually difficult to do as long as a reasonable set of well-known special cases does not exist. Despite that many of the available methods provide possibilities for analyzing the different cases by elementary calculus methods, the arising expressions usually give rise to extensive algebraic manipulations when doing the required estimates. In this case we solved many of these problems through a careful selection of the quantities computed.

\begin{figure}
\epsfxsize=138mm
\begin{picture}(232,232)(0,0)
\put(0,0){\epsfbox{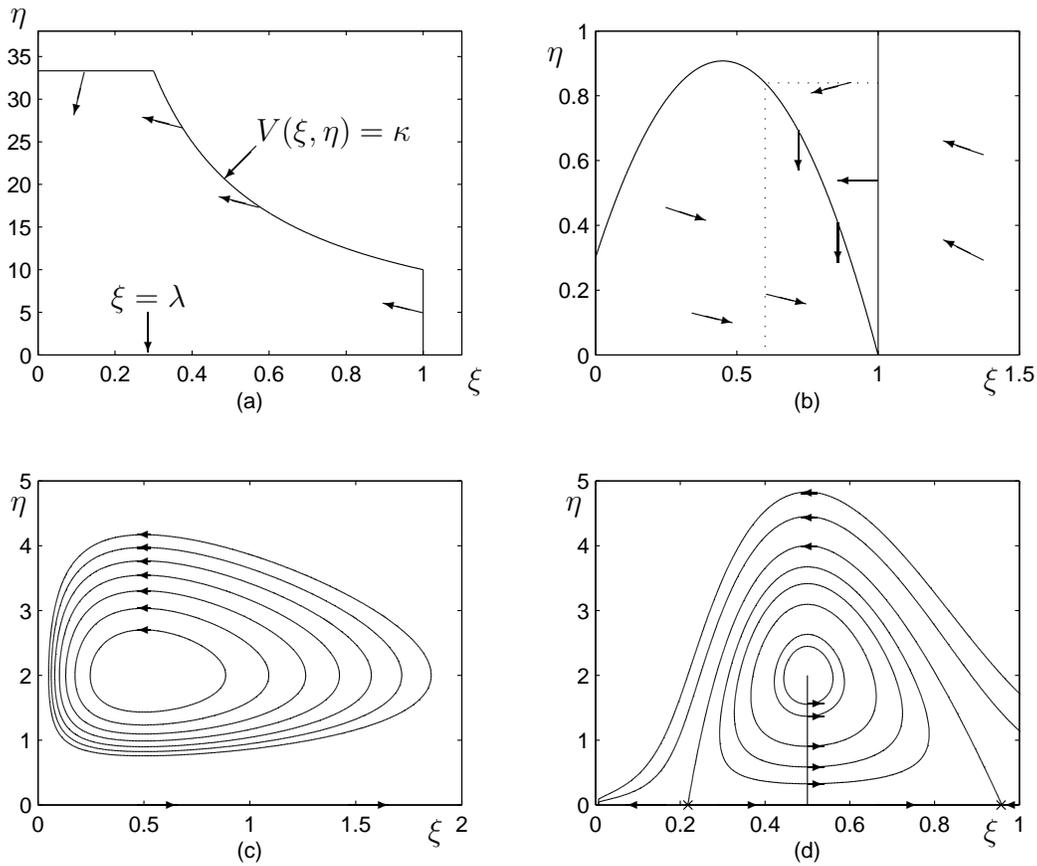}}
\put(95,275){$V(\xi,\eta)=\kappa$}
\put(95,273){\vector(-1,-1){12}}
\put(40,212){$\xi=\lambda$}
\put(54,210){\vector(0,-1){15}}
\put(175,181){$\xi$}
\put(2,320){$\eta$}
\put(158,210){\vector(-4,1){15}}
\put(96,250){\vector(-4,1){15}}
\put(67,280){\vector(-4,1){15}}
\put(30,301){\vector(-1,-4){4}}
\put(260,210){\vector(4,-1){15}}
\put(250,250){\vector(3,-1){15}}
\put(320,297){\vector(-4,-1){15}}
\put(330,260){\vector(-1,0){15}}
\put(288,217){\vector(4,-1){15}}
\put(300,279){\vector(0,-1){15}}
\put(315,244){\vector(0,-1){15}}
\put(370,230){\vector(-2,1){15}}
\put(370,270){\vector(-3,1){15}}
\put(370,181){$\xi$}
\put(205,305){$\eta$}
\put(55,126){\vector(-1,0){5}}
\put(55,121){\vector(-1,0){5}}
\put(55,115.8){\vector(-1,0){5}}
\put(55,110.5){\vector(-1,0){5}}
\put(55,104.5){\vector(-1,0){5}}
\put(55,98){\vector(-1,0){5}}
\put(55,89.7){\vector(-1,0){5}}
\put(50,23.5){\vector(1,0){15}}
\put(130,23.5){\vector(1,0){15}}
\put(160,10){$\xi$}
\put(2,135){$\eta$}
\put(307,141.7){\vector(-1,0){6}}
\put(307,132.5){\vector(-1,0){6}}
\put(307,121.5){\vector(-1,0){6}}
\put(303.5,62){\vector(1,0){6}}
\put(303.5,57){\vector(1,0){6}}
\put(303.5,45.8){\vector(1,0){6}}
\put(303.5,38){\vector(1,0){6}}
\put(303.5,31.5){\vector(1,0){6}}
\put(250,23.5){\vector(-1,0){15}}
\put(270,23.5){\vector(1,0){15}}
\put(330,23.5){\vector(1,0){15}}
\put(383,23.5){\vector(-1,0){5}}
\put(370,10){$\xi$}
\put(212,135){$\eta$}
\end{picture}
\caption{(a) Trapping region formed for system (\protect\ref{pp_iso}) by the level curve $V(\xi,\eta)=\kappa$ in the interval $\lambda<\xi<1$. (b) The fixed point $(1,0)$ is globally asymptotically stable if $\lambda\geq 1$. (c) For $\theta=0$, all solutions of (\protect\ref{rotsyst}) are closed orbits. (d) For $\theta>0$ there may be non-closed solutions of (\protect\ref{rotsyst}), but all solutions passing the cross-section $\xi=\lambda$, $0<\eta\leq F(\lambda)$ are closed. The $\times$-marks denote two saddles in a heteroclinic loop.} \label{kurva}
\end{figure}

\bibliographystyle{abbrv}
\bibliography{artiklar}
\end{document}